\newtheorem{theorem}{Theorem}[section]
\newtheorem{lemma}[theorem]{Lemma}
\newtheorem{example}[theorem]{Example}
\theoremstyle{definition}
\theoremstyle{remark}
\newtheorem{remark}[theorem]{Remark}
\newcommand{\tp}{{\rm TP}\hskip0.02cm}
\newcommand{\tc}{{\rm TC}\hskip0.02cm}
\def\f2{\mathbb{F}_2}
\def\lip{\hskip0.02cm{\rm Lip}\hskip0.01cm}
\def\supp{\hskip0.02cm{\rm supp}\hskip0.01cm}
\newcommand{\ep}{\varepsilon}
\newcommand{\1}{\mathbf{1}}
\newcommand\remove[1]{}
\begin{document}

\title{\LARGE On relations between transportation cost spaces and $\ell_1$}

\author{Sofiya Ostrovska and Mikhail~I.~Ostrovskii}

\date{\today}
\maketitle



\begin{abstract} The present paper deals with some structural properties of transportation cost spaces,
also known as Arens-Eells spaces, Lipschitz-free spaces and
Wasserstein spaces. The main results of this work are: (1) A
necessary and sufficient condition on an infinite metric space
$M$, under which the transportation cost space on $M$ contains an
isometric copy of $\ell_1$. The obtained condition is applied to
answer the open questions asked by C\'uth and Johanis (2017)
concerning several specific metric spaces. (2) The description of
the transportation cost space of a weighted finite graph $G$ as
the quotient $\ell_1(E(G))/Z(G)$, where $E(G)$ is the edge set and
$Z(G)$ is the cycle space of $G$.
\end{abstract}

{\small \noindent{\bf Keywords.} Arens-Eells space, Banach space,
earth mover distance, Kantorovich-Rubin\-stein distance,
Lipschitz-free space, transportation cost, Wasserstein distance}

{\small \noindent{\bf 2010 Mathematics Subject Classification.}
Primary: 46B04; Secondary: 46B20, 46B85, 91B32}

\section{Introduction}

\subsection{Definitions}\label{S:Def}

Let $(M,d)$ be a metric space. Consider a real-valued finitely
supported function $f$ on $M$ with a zero sum, that is,
$\sum_{v\in \supp f}f(v)=0$. A natural and important
interpretation of such function, which goes back to at least
Kantorovich-Gavurin \cite{KG49}, is to consider it as a {\it
transportation problem}: one needs to transport certain product
from locations where $f(v)>0$ to locations where $f(v)<0$. More
formally, we represent $f$ as
$f=a_1(\1_{x_1}-\1_{y_1})+a_2(\1_{x_2}-\1_{y_2})+\dots+
a_n(\1_{x_n}-\1_{y_n})$, where $a_i\ge 0$, $x_i,y_i\in M$, and
$\1_u(x)$ for $u\in M$ is the {\it indicator function} of $\{u\}$.
Any such representation of $f$ will be called a {\it
transportation plan} for $f$. The {\it cost} of this
transportation plan (which consists in moving $a_i$ units from
$x_i$ to $y_i$) is defined as $\sum_{i=1}^n a_id(x_i,y_i)$. We
denote the real vector space of all transportation problems by
$\tp(M)$. We introduce the {\it transportation cost norm}
$\|f\|_{\tc}$ of a transportation problem $f$ as the minimal cost
over all such transportation plans. It is easy to see that the
minimum is attained - we consider finitely supported functions -
and that $\|\cdot\|_\tc$ is a norm. The completion of this normed
space is called a {\it transportation cost space} and is denoted
by $\tc(M)$.

Transportation cost spaces are of interest in many areas and are
studied under many different names (the most common ones are
included in the keywords section). We prefer to use the term {\it
transportation cost space} since it makes the subject of this work
instantly clear to a wide circle of readers and it also reflects
the historical approach leading to these notions. Interested
readers can find a review of the main definitions, notions, facts,
terminology and historical notes pertinent to the subject in
\cite{OO19+}.

In the theory of metric embeddings, transportation cost spaces are
of interest due to  the following observation by Arens and Eells
\cite{AE56}: The metric space $M$ admits a canonical isometric
embedding into $\tc(M)$, given by $v\mapsto \1_v-\1_O$, where $O$
is a base point in $M$.

For background information on the transportation cost spaces we
refer to \cite[Chapter 10]{Ost13} (where such spaces are called
{\it Lipschitz free spaces}) and \cite[Chapter 3]{Wea18} (where
such spaces are called {\it Arens-Eells spaces}).

\subsection{Motivation and statement of results}\label{S:Results}

In this work, new results pertinent to  the relations between the
structure of transportation cost spaces and $L_1$ are obtained.
Previously, such relations have been studied by many researchers,
see, for example,  \cite{APP19+}, \cite{Cha02}--\cite{DKO18+},
\cite{God10}, \cite{GL18}, \cite{IT03},
 \cite{KMO19+}, \cite{KN06}, \cite{NS07}, \cite{OO19+}.

From the historical perspective,  there are a few arguments in
favor of studying the Banach-space-theoretical structure of
transportation cost spaces. Below,  some of them are provided:

  (1) The linearization of the theory of cotype for metric spaces. This idea was put forth by
Bill Johnson. The idea is described in \cite[p.~223]{Bou86} and is
discussed in \cite{Nao18}.

 (2) The program of
using transportation cost spaces  to solve some important problems
of linear and nonlinear theory of Banach spaces suggested by
Godefroy-Kalton \cite{GK03}, who  used the name {\it
Lipschitz-free spaces}. This program was substantially advanced by
Kalton \cite{Kal04}.

(3) The observation by Arens and Eells \cite{AE56} (see above)
shows that transportation cost spaces are natural target spaces
for metric embeddings.  See \cite[Chapter 10]{Ost13}.\medskip

The main results of this paper include the following outcomes:

1) A necessary and sufficient condition for containment of
$\ell_1$ in $\tc(M)$ isometrically (Theorem \ref{T:Contell_1}).
This result relies on the previous studies in this direction,
namely, \cite{CJ17}, \cite[Theorem 3.1]{OO19+}, and \cite{KMO19+}.
It is used to answer the questions in \cite[Remark
10,p.~3416]{CJ17} which were left open in \cite{CJ17} and
\cite{OO19+}.

2) A  generalization of  the quotient over the cycle space
description of $\tc(G)$ for an unweighted finite graph $G$ (see
\cite[Proposition 10.10]{Ost13}) to the case of an arbitrary
finite metric space. It has to be mentioned that somewhat similar
descriptions for $\tc(\mathbb{R}^n)$ were obtained in \cite{CKK17}
and \cite{GL18}.

\section{Isometric copies of $\ell_1$ in $\tc(M)$ with infinite
$M$}

In what follows,  the standard terminology of matching theory
\cite{LP09} is used. We consider a metric space $M$ as an infinite
weighted complete graph, where the weight of each edge is defined
as the distance between its ends. Let $V$ be a subset of $M$ of
cardinality $2n$, $n\in\mathbb{N}$. If edges $\{x_iy_i\}_{i=1}^n$
with $x_i,y_i\in V$, $x_i\ne y_i$, do not have common ends, we
call $\{x_iy_i\}_{i=1}^n$ a {\it perfect matching} of the subgraph
of $M$ spanned by $V$. We define the {\it weight} of the perfect
matching  $\{x_iy_i\}_{i=1}^n$ as $\sum_{i=1}^n d(x_i, y_i)$.

\begin{theorem}\label{T:Contell_1} The space $\tc(M)$ contains $\ell_1$ isometrically
if and only if there exists a sequence of pairs
$\{x_i,y_i\}_{i=1}^\infty$ in $M$, with all elements distinct,
such that each set $\{x_iy_i\}_{i=1}^n$ of edges is a minimum
weight perfect matching in the subgraph spanned by
$\{x_i,y_i\}_{i=1}^n$.
\end{theorem}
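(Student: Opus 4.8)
The plan is to pass through a finite-dimensional statement and then assemble the infinite object, reading the whole equivalence off a single matching condition. Throughout I use the Kantorovich--Rubinstein duality $\|f\|_{\tc}=\sup\{\sum_{v}f(v)g(v):\lip(g)\le 1\}$, together with the facts that $\|\1_x-\1_y\|_{\tc}=d(x,y)$ and that, writing a transportation plan as $f=\sum_i a_i d_i\,u_i$ with $a_i\ge 0$, $d_i=d(x_i,y_i)$, and normalized molecules $u_i=(\1_{x_i}-\1_{y_i})/d_i$, one has $B_{\tc(M)}=\overline{\conv}\{\pm u\}$. The finite core I would establish is: for distinct points $x_1,y_1,\dots,x_n,y_n$, the molecules satisfy $\|\sum_i a_iu_i\|_{\tc}=\sum_i|a_i|$ for all scalars if and only if $\{x_iy_i\}_{i=1}^n$ is a minimum weight perfect matching of $V=\{x_i,y_i\}$. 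The upper bound is the triangle inequality (each $\|u_i\|=1$). For the lower bound, after swapping $x_i\leftrightarrow y_i$ on the coordinates with $a_i<0$ (which alters neither the matching nor its weight), it suffices to produce a $1$-Lipschitz $g$ on $V$ with $g(x_i)-g(y_i)=d_i$ for every $i$, since then $\sum_v f(v)g(v)=\sum_i|a_i|$. Such a $g$ solves a system of difference constraints, and by the classical feasibility criterion it exists iff the constraint graph --- arcs $x_i\to y_i$ of length $-d_i$ together with arcs $a\to b$ of length $d(a,b)$ for every ordered pair in $V$ --- has no negative directed cycle.

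The heart is to show that minimality of the matching rules out negative cycles. Given a simple directed cycle, delete its forward matching arcs $x_i\to y_i$ for $i$ in some set $F$; since all points are distinct, no two such arcs are consecutive, so what remains is a union of nonempty distance-arc paths, each running from some $y_i$ to some $x_{\pi(i)}$ for a bijection $\pi$ of $F$. By the triangle inequality each such path has length at least $d(y_i,x_{\pi(i)})$, whence the cycle weight is at least $-\sum_{i\in F}d_i+\sum_{i\in F}d(y_i,x_{\pi(i)})$. The edges $\{y_i,x_{\pi(i)}\}_{i\in F}$ form a perfect matching of $\{x_i,y_i\}_{i\in F}$, and the restriction of a minimum weight perfect matching to the vertices it saturates is again of minimum weight; hence $\sum_{i\in F}d_i\le\sum_{i\in F}d(y_i,x_{\pi(i)})$ and the cycle weight is nonnegative. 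This yields the desired $g$ and the lower bound. This finite statement is \cite[Theorem 3.1]{OO19+}.

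\textbf{Converse and the ``if'' direction.} The converse of the finite core is the same computation read backwards: if $\{x_iy_i\}$ is not of minimum weight, the symmetric difference with a cheaper perfect matching of $V$ contains an alternating cycle that produces a negative cycle in the constraint graph; since for finitely supported $f$ the supremum defining $\|\sum_i u_i\|_{\tc}$ is attained, some inequality $g(x_i)-g(y_i)\le d_i$ must then be strict and the norm drops below $n$, so isometric $\ell_1^n$ fails. For the ``if'' direction of the theorem I simply apply the finite core to every initial segment of the given sequence: each $\{x_iy_i\}_{i=1}^n$ being a minimum weight perfect matching gives $\|\sum_{i=1}^n a_iu_i\|_{\tc}=\sum_{i=1}^n|a_i|$ for all $n$ and all scalars, so $(u_i)$ is isometrically equivalent to the unit vector basis of $\ell_1$ and its closed linear span is an isometric copy of $\ell_1$ in $\tc(M)$.

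\textbf{The ``only if'' direction} is where I expect the principal difficulty. Here one must start from an abstract isometric copy of $\ell_1$, presented by arbitrary unit vectors $f_k$ rather than by molecules, and manufacture the coherent sequence of pairs. My plan is to reduce to molecules using $B_{\tc(M)}=\overline{\conv}\{\pm u\}$ together with the rigidity of the relations $\|\sum_k a_kf_k\|=\sum_k|a_k|$: the $1$-Lipschitz functionals biorthogonal to the $f_k$ (obtained by Hahn--Banach from the coordinate functionals of $\ell_1$) constrain the molecular expansions of the $f_k$ enough to extract genuine molecules $u_i=(\1_{x_i}-\1_{y_i})/d_i$ spanning $\ell_1$ isometrically; this is the content I would import from \cite{KMO19+} and \cite{CJ17}. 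Once such molecules are in hand, the finite core forces each initial segment to be a minimum weight perfect matching, and a short cancellation argument shows that a shared endpoint would strictly lower some norm, so all endpoints are distinct. The genuinely delicate point is \emph{coherence} --- arranging the pairs so that every initial segment, and not merely each fixed size in isolation, is a minimum weight perfect matching. I would secure this by building the sequence one pair at a time, using weak\*-compactness to pass to limits and the finite characterization to guarantee that the matching condition is preserved at each stage; I expect this coherence and diagonalization step, rather than the finite core, to be the main technical burden.
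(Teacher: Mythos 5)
Your finite core and the ``if'' direction are correct and coincide with the paper's sufficiency argument: the equivalence ``molecules isometrically span $\ell_1^n$ $\Leftrightarrow$ minimum weight perfect matching'' is exactly \cite[Theorem 3.1]{OO19+}/\cite{KMO19+}, which the paper simply cites and applies to initial segments. The genuine gap is the ``only if'' direction, which you have not proved but deferred to the literature: the assertion that the biorthogonal $1$-Lipschitz functionals ``constrain the molecular expansions of the $f_k$ enough to extract genuine molecules'' is not available in \cite{CJ17} or \cite{KMO19+} --- the first constructs copies of $\ell_1$ under geometric hypotheses on $M$, the second concerns molecules over finite metric spaces --- and this extraction is precisely the hard content of the theorem, occupying essentially all of the paper's proof. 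Moreover, one concrete step of your sketch is false: it is not true that ``a shared endpoint would strictly lower some norm, so all endpoints are distinct.'' Take $M$ to be a star: a center $x$ and points $y_1,y_2,\dots$ with $d(y_i,y_j)=d(x,y_i)+d(x,y_j)$. The molecules $(\1_x-\1_{y_i})/d(x,y_i)$ are isometrically equivalent to the unit vector basis of $\ell_1$, yet every pair shares the endpoint $x$. The paper handles this case not by deriving a norm defect but by proving that a common endpoint forces equality in all triangle inequalities through it, and then \emph{re-pairing} the far endpoints ($\widetilde x_i=y_{2i-1}$, $\widetilde y_i=y_{2i}$); so the pairs demanded by the theorem are in general not the pairs visible in the expansions of the $f_k$, and any proof must account for this.

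Your plan also misses the dichotomy that structures the paper's necessity proof. When $M$ is not uniformly discrete, elements of the completion $\tc(M)$ need not admit any usable ``molecular expansion'' with controlled supports (for instance, $\tc([0,1])$ is isometric to $L_1(0,1)$), so a program that reads pairs off expansions of the $f_k$ cannot even start. The paper sidesteps this by showing that for such $M$ the required sequence of pairs exists \emph{unconditionally}, by purely metric constructions (an accumulation point, or pairs $u_i\ne v_i$ with $d(u_i,v_i)\to 0$ inside a $\delta$-separated sequence), never using the $f_k$ at all. Only for uniformly discrete $M$ does it extract structure from the $f_k$, and there the inequality $\delta\|f\|_1/2\le\|f\|_\tc$ embeds $\tc(M)$ continuously into $\ell_1(M)$, making supports meaningful; even then the argument needs the decompositions $f_i=S_i^m+R_i^m$ with $S_i^m\in\tp(M)$, a pigeonhole on a finite matrix to find pairs $(x_i,y_i)$ carrying a fixed amount $\ep_i$ of mass in optimal plans for every $m$, and a plan improvement $\tau_n$ independent of $m$ to contradict $\|f_1+\dots+f_n\|_\tc=n$. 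Finally, the coherence issue you flag as the main burden dissolves in the paper's contradiction format: one assumes that \emph{no} coherent sequence of pairs exists and derives a norm defect for the extracted pairs, so no weak*-compactness or diagonalization over initial segments is required. As written, your proposal establishes sufficiency but contains no proof of necessity.
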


\begin{proof} {\bf Sufficiency.} Let $\{x_i,y_i\}_{i=1}^\infty$ be
such a sequence. Set $f_i=(\1_{x_i}-\1_{y_i})/d(x_i,y_i)$. Since
each finite set $\{f_i\}_{i=1}^n$, $n\in\mathbb{N}$ is
isometrically equivalent to the unit vector basis of $\ell_1^n$ by
the argument of \cite{KMO19+}, we are done.\medskip

{\bf Necessity.} Recall that a metric space $M$ is called {\it
uniformly discrete} if there exists a constant $\delta
> 0$ such that
\[\forall u,v\in M~(u\ne v)\Rightarrow (d(u,v)\ge\delta).\]

To  prove the  necessity,  we shall consider the three cases:

\begin{enumerate}[(A)]

\item \label{C:A} The space $M$ has an accumulation point, which
means that there is a sequence $\{u_i\}_{i=1}^\infty$ of distinct
elements in $M$ and $u\in M$, such that
$\lim_{i\to\infty}d(u_i,u)=0$.

\item \label{C:B} The space $M$ is not uniformly discrete, but
does not have an accumulation point.

\item \label{C:C} The space $M$ is uniformly discrete.

\end{enumerate}

The proof of the necessity will be performed according to the
following steps:

\begin{itemize}

\item First, we derive that in Cases \eqref{C:A} and \eqref{C:B},
the space $M$ contains a sequence of pairs
$\{x_i,y_i\}_{i=1}^\infty$ such that for each $n\in\mathbb{N}$ the
set $\{x_iy_i\}_{i=1}^n$ of edges is a minimum weight perfect
matching in the subgraph spanned by $\{x_i,y_i\}_{i=1}^n$.

\item Further, it will be shown that in Case \eqref{C:C} either
$M$ contains a sequence of pairs $\{x_i,y_i\}_{i=1}^\infty$ such
that for each $n\in\mathbb{N}$ the set $\{x_iy_i\}_{i=1}^n$ of
edges is a minimum weight perfect matching in the subgraph spanned
by $\{x_i,y_i\}_{i=1}^n$, or $\tc(M)$ does not contain an
isometric copy of $\ell_1$.

\end{itemize}

Case \eqref{C:A}. If $M$ has an accumulation point $u$, and
$\{u_i\}_{i=1}^\infty$ is as in \eqref{C:A}, then either there are
infinitely many pairwise disjoint pairs $(i,j)$, $i,j\in \mathbb{N},~j>i$,
such that all of the triangle inequalities below are strict:

\begin{equation}\label{E:TriStrict} d(u_i,u_j)<d(u,u_i)+d(u,u_j),\end{equation} or, after eliminating
finitely many elements of the sequence, for all of the remaining
ones, the equality is reached in the triangle inequalities:
\begin{equation}\label{E:TriEq} d(u_i,u_j)=d(u,u_i)+d(u,u_j).\end{equation}

To finish the proof, it suffices to select two disjoint
subsequences $\{x_i\}_{i=1}^\infty$ and $\{y_i\}_{i=1}^\infty$  of
the sequence $\{u_i\}_{i=1}^\infty$ in such a way  that, for each
$n\in\mathbb{N}$, the set of edges $\{x_iy_i\}_{i=1}^n$ is the
minimum weight perfect matching in the complete graph with
vertices $\{x_i, y_i\}_{i=1}^n$.

First, this will be done in the easier  case where all triangles
inequalities are equalities, see \eqref{E:TriEq}. In this case,
one may let $x_i=u_{2i-1}$ and $y_i=u_{2i}$ and check that, by
virtue of equalities \eqref{E:TriEq}, any perfect matching in the
weighted graph with the vertices $\{u_i\}_{i=1}^{2n}$ is a minimum
weight matching since all of them have weight $\sum_{i=1}^{2n}
d(u_i,u)$.

In the case where \eqref{E:TriStrict} is satisfied for infinitely
many disjoint pairs $\{(i_t,j_t)\}_{t=1}^\infty$, we combine the
fact that the differences
$d(u,u_{i_t})+d(u,u_{j_t})-d(u_{i_t},u_{j_t})$ are strictly
positive and $\lim_{i\to\infty}d(u_i,u)=0$, and get that one can
pass to a subsequence (preserving the notation
$\{(i_t,j_t)\}_{t=1}^\infty$ for the subsequence) in such a way
that, for each $t$ and for any $k_1$ and $k_2$ which are $i_s$ or
$j_s$ for some $s>t$, the next inequality holds:
\begin{equation} \label{E:Excess}
d(u_{i_t},u_{k_1})+d(u_{j_t},u_{k_2})> \sum_{m=t}^\infty
d(u_{i_m},u_{j_m}).\end{equation}

Now, let \[x_1=u_{i_1}, y_1=u_{j_1},\dots,x_n=u_{i_n}, y_n=u_{j_n},
\dots.\]

To complete the proof, it remains to check that
$\{x_iy_i\}_{i=1}^n$ is the minimum weight perfect matching in the
complete graph spanned by $\{x_i,y_i\}_{i=1}^n$. It will be proved
inductively that $x_iy_i$, $i=1,2,\dots$, should be in the minimum
weight perfect matching. Assume that there is a minimum weight
perfect matching which does not contain $x_1y_1$. Then we have to
match $x_1=u_{i_1}$ with some $u_{k_1}$ and $y_1=u_{j_1}$ with
some $u_{k_2}$, where $k_1$ and $k_2$ are $i_s$ or $j_s$ for some
$s\in\{2,\dots,n\}$. But then \eqref{E:Excess} implies that the
sum $d(x_1,u_{k_1})+d(y_1,u_{k_2})$ is strictly larger than the
weight of the matching $\{x_iy_i\}_{i=1}^n$. Thus, $x_1y_1$ should
be in any minimum weight perfect matching.

It is clear that the same argument can be repeated for $x_2y_2$,
and so on. This completes the proof in the case where
\eqref{E:TriStrict} is satisfied for infinitely many disjoint
pairs, and, thus, for the case where $M$ has an accumulation
point.

\begin{remark} Our argument in Case \eqref{C:A} is close to the one in \cite[Theorem
5]{CJ17}. For the convenience of the reader, we presented our
argument in the form independent of \cite{CJ17}.
\end{remark}

Case \eqref{C:B}. Since the space $M$ is not uniformly discrete,
there are sequences $\{u_i\}_{i=1}^\infty$  and
$\{v_i\}_{i=1}^\infty$ in $M$ such that $u_i\ne v_i$ for all
$i\in\mathbb{N}$ and $\lim_{i\to\infty} d(u_i,v_i)=0$. The
following standard lemma will be applied.

\begin{lemma}\label{L:Subseq} Each sequence $\{w_i\}_{i=1}^\infty$ in a metric
space either contains a Cauchy subsequence or a $\delta$-separated
subsequence, where $\delta$ is some positive number and
\emph{$\delta$-separated} means that any two elements are at
distance at least $\delta$.
\end{lemma}

We apply Lemma \ref{L:Subseq} to the sequence
$\{u_i\}_{i=1}^\infty$ and keep the notation
$\{u_i\}_{i=1}^\infty$ for the obtained subsequence, and the
notation $\{v_i\}_{i=1}^\infty$ for the corresponding subsequence
of $\{v_i\}_{i=1}^\infty$.

If $\{u_i\}_{i=1}^\infty$ is a Cauchy sequence, we consider the
completion $\widetilde M$ of $M$ and a point $\widetilde
u\in\widetilde M$ such that $\lim_{n\to\infty}d(u_i,\widetilde
u)=0$, and construct, as in Case \eqref{C:A}, sequences
$\{x_i\}_{i=1}^\infty$ and $\{y_i\}_{i=1}^\infty$. Since these
sequences are subsequences of $\{u_i\}_{i=1}^\infty$, the
constructed subspace (see the proof of {\bf Sufficiency}) is not
only in $\tc(\widetilde M)$, but also in $\tc(M)$.
\medskip

Now, assume that  $\{u_i\}_{i=1}^\infty$ is $\delta$-separated for
some $\delta>0$. In this case, after omitting finite number of
terms in the sequences $\{u_i\}_{i=1}^\infty$  and
$\{v_i\}_{i=1}^\infty$, we may assume that $d(u_i,v_i)<\delta/4$
for every $i$. Denote the obtained subsequences of
$\{u_i\}_{i=1}^\infty$ and $\{v_i\}_{i=1}^\infty$ by
$\{x_i\}_{i=1}^\infty$  and $\{y_i\}_{i=1}^\infty$, respectively.

We have $d(x_i,y_i)<\delta/4$ and also, due to the triangle
inequality and $\delta$-separation of $\{x_i\}_{i=1}^\infty$, the
inequalities $d(x_i,y_j)>3\delta/4$ and $d(y_i,y_j)>\delta/2$ for
$i\ne j$ hold. These inequalities immediately imply that, for
every $n\in \mathbb{N}$ the set $\{x_iy_i\}_{i=1}^n$ of edges is a
minimum weight perfect matching in the subgraph spanned by
$\{x_i,y_i\}_{i=1}^n$. This completes our proof in Case
\eqref{C:B}.
\bigskip

Case \eqref{C:C}. We show that if we assume both

\begin{enumerate}[{\rm (i)}]

\item \label{I:i} That $\tc(M)$ contains a sequence
$\{f_i\}_{i=1}^\infty$ which is isometrically equivalent to the
unit vector basis of $\ell_1$,

and

\item\label{I:ii} That $M$ does not contain a sequence of pairs
$\{x_i,y_i\}_{i=1}^\infty$, such that each set
$\{x_iy_i\}_{i=1}^n$ of edges is a minimum weight perfect matching
in the subgraph spanned by $\{x_i,y_i\}_{i=1}^n$,

we get a contradiction.

\end{enumerate}

We start with a simple case where all $\{f_i\}$ are in $\tp(M)$.
Since our result is isometric, this will not complete the proof
for
 the general case. However,   in the easier
case $\{f_i\}\subset\tp(M)$, the main ideas are more transparent.
\medskip

For each element of the sequence $f_i$, we pick an optimal
transportation plan (it does not have to be unique):
\begin{equation}\label{E:TranspPlans}f_i=\sum_{j=1}^{m(i)}
a_{j,i}(\1_{x_{j,i}}-\1_{y_{j,i}}),~a_{j.i}>0.\end{equation} We
say that  $T_i:=\{x_{j,i},y_{j,i}\}_{j=1}^{m(i)}$ is the set of
{\it transportation pairs} for $f_i$.

\begin{lemma}\label{L:Alter} Each sequence $\{f_i\}_{i=1}^\infty\subset \tp(M)$ contains a subsequence $\{f_{i_n}\}_{n=1}^\infty$
satisfying at least one of the two conditions:

\begin{enumerate}[{\rm (1)}]

\item \label{I:Dis} In each $T_{i_n}$, there exists a
transportation pair such that the obtained set of transportation
pairs is pairwise disjoint.

\item \label{I:CommonPt} One can pick in each $T_{i_n}$ a
transportation pair such that, for the obtained set of
transportation pairs, there is an element $x\in M$ contained in
each of them.

\end{enumerate}
\end{lemma}

\begin{proof} This lemma can be proved by considering an
alternative: either there is an element $x\in M$ contained in
infinitely many transportation pairs or there is no such element.
\end{proof}

We apply Lemma \ref{L:Alter} to the sequence
$\{f_i\}_{i=1}^\infty$ equivalent to the unit vector basis of
$\ell_1$. Assume that condition \eqref{I:Dis} is satisfied for one
of its subsequences, which we still denote $\{f_i\}_{i=1}^\infty$.
Without loss of generality it may be assumed that the disjoint pairs are
$(x_{1,i},y_{1,i})$, denoted
 by $(x_i,y_i)$, for short. By our assumption \eqref{I:ii}, there
is $n\in \mathbb{N}$ such that $\{x_iy_i\}_{i=1}^n$ is not a
minimum weight perfect matching of the graph spanned by
$\{x_i,y_i\}_{i=1}^n$. Pick a minimum weight perfect matching for
this graph. Interchanging the labels in some pairs $(x_i,y_i)$ and
changing the signs of the corresponding $f_i$ if needed, one may
assume that the minimum weight perfect matching is of the form
$\{x_iy_{\pi(i)}\}_{i=1}^n$ for some bijection
$\pi:\{1,\dots,n\}\to \{1,\dots,n\}$.

Let $a_i>0$ be the quantity transported from $x_i$ to $y_i$ in the
plans \eqref{E:TranspPlans} (written in a different notation). Set
$a=\min_{1\le i\le n} a_i>0$.

Now,  consider the vector $\sum_{i=1}^n f_i$ and construct the
following transportation plan for it: the plan is close to being
the sum of plans \eqref{E:TranspPlans}, but
\begin{equation}\label{E:CanImprove}\sum_{i=1}^na_i(\1_{x_i}-\1_{y_i})\end{equation} in it is replaced by

\begin{equation}\label{E:Improved}\sum_{i=1}^n(a_i-a)(\1_{x_i}-\1_{y_i})+\sum_{i=1}^na(\1_{x_i}-\1_{y_\pi(i)}).\end{equation}
Since $\{x_iy_{\pi(i)}\}_{i=1}^n$ is a minimum weight perfect
matching while $\{x_iy_{i}\}_{i=1}^n$ is not, the obtained
transportation plan has a strictly smaller cost than the sum of
the plans \eqref{E:TranspPlans}. This leads to
\[\left\|\sum_{i=1}^n f_i\right\|_\tc<\sum_{i=1}^n\|f_i\|_\tc,\]
which is a contradiction with the hypothesis (i) that $\{f_i\}$
is isometrically equivalent to the unit vector basis of $\ell_1$.

Next, suppose that condition \eqref{I:CommonPt} of Lemma
\ref{L:Alter} is satisfied for a subsequence of
$\{f_i\}_{i=1}^\infty$, which we still denote
$\{f_i\}_{i=1}^\infty$. Relabelling and changing the signs of $f_i$ if needed, it may be assumed that
$\{x_{1,i},y_{1,i}\}_{i=1}^\infty$ are such that all $x_{1,i}$ are
the same, let us denote all of them by $x$. We claim that if
$\{f_i\}_{i=1}^\infty$ are isometrically equivalent to the unit
vector basis of $\ell_1$, then
\begin{equation}\label{E:Tree}\forall i,j\in \mathbb{N}\quad
d(x,y_{1,i})+d(x,y_{1,j})=d(y_{1,i},y_{1,j}).
\end{equation}
Assume the contrary, that is,
\begin{equation}\label{E:Strict}
\exists i,j\in \mathbb{N}\quad
d(x,y_{1,i})+d(x,y_{1,j})>d(y_{1,i},y_{1,j}).
\end{equation} Let $a=\min\{a_{1,i},a_{1,j}\}$. Consider the
function $f_i-f_j$, subtract the corresponding plans
\eqref{E:TranspPlans}, and make the following modification in the
resulting transportation plan. We replace the difference
\[a_{1,i}(\1_{x}-\1_{y_{1,i}})-a_{1,j}(\1_{x}-\1_{y_{1,j}})\]
by
\[(a_{1,i}-a)(\1_{x}-\1_{y_{1,i}})-(a_{1,j}-a)(\1_{x}-\1_{y_{1,j}})+a(\1_{y_{1,j}}-\1_{y_{1,i}}).
\]
The strict inequality \eqref{E:Strict} implies that it is a
strictly better plan. Thus,
$\|f_i-f_j\|_\tc<\|f_i\|_\tc+\|f_j\|_\tc$, and this contradiction
proves \eqref{E:Tree}.

Now, we introduce a new sequence $\{\widetilde x_i,\widetilde
y_i\}_{i=1}^\infty$ by setting $\widetilde x_i=y_{1, 2i-1}$ and
$\widetilde y_i=y_{1, 2i}$. As in the case (A)(2), it is easy to see that \eqref{E:Tree}
implies that the sequence of pairs $\{\widetilde x_i,\widetilde
y_i\}_{i=1}^\infty$ satisfies the condition in \eqref{I:ii}, and
we get a contradiction. This completes the proof in the case
$\{f_i\}\subset\tp(M)$.
\medskip

In the rest of this proof, our aim is to generalize the argument
we just presented to the case where elements
$\{f_i\}_{i=1}^\infty$ are not in $\tp(M)$ but in its completion
$\tc(M)$.
\medskip

By the standard description of the completion (see \cite[Section
3.11.4]{FHH+11}), each element $f\in\tc(M)$ can be presented as a
series of the form
\begin{equation}\label{E:SeriesRep}f=\sum_{k=1}^\infty\left(\sum_{i=s_k+1}^{s_{k+1}}a_i(\1_{x_i}-\1_{y_i})\right)\end{equation}
for some $0=s_1<s_2<\dots<s_k<\dots$, $\{x_i\}\subset M$,
$\{y_i\}\subset M$, and $\{a_i\}\subset \mathbb{R}^+$ with
\begin{equation}\label{E:Complet}\sum_{k=1}^\infty\left\|\sum_{i=s_k+1}^{s_{k+1}}a_i(\1_{x_i}-\1_{y_i})\right\|_\tc<\infty.\end{equation}
Furthermore, the norm $\|f\|_\tc$ is equal to the infimum of sums
of the form \eqref{E:Complet} over all representations
\eqref{E:SeriesRep}, in which we assume that the sums in brackets
are optimal transportation plans for the corresponding elements of
$\tp(M)$.

We are now considering Case \eqref{C:C} where the space $M$ is
uniformly discrete. Let
\begin{equation}\label{E:delta}\delta=\inf_{x\ne y\in
M}d(x,y).\end{equation} For $f\in \tp(M)$, we introduce
$\|f\|_1=\sum_{v\in\supp f}|f(v)|$ and extend this norm to
functions on $M$ with countable supports and absolutely summable
collections of values.

For $f\in\tp(M)$ the amount of the product which is to be
delivered is $\|f\|_1/2$ and the vector $f$ is in the kernel of
the linear functional on finitely supported vectors defined as the
sum of the values. By \eqref{E:delta}, we get that
$\delta\|f\|_1/2\le \|f\|_\tc$ for any $f\in\tp(M)$. For this
reason, the space $\tc(M)$ is continuously embedded in
$\ell_1(M)$, and is in the kernel of the functional defined as the
sum of all coordinates (this functional is naturally defined for
$f\in\ell_1(M)$).
\medskip

\begin{lemma}\label{L:L_1_below} Let $h\in\tc(M)$ and $g\in\tp(M)$  be such that the diameter of the support
of $g$ be  $\le D$. Then
$\|g+h\|_1\ge\frac2D(\|g\|_\tc-\|h\|_\tc)$.
\end{lemma}

Note that the conclusion of Lemma \ref{L:L_1_below} is nontrivial only if $\|h\|_\tc<\|g\|_\tc$.

\begin{proof}[Proof of Lemma \ref{L:L_1_below}] If $h\notin \tp(M)$, we can approximate $h$ arbitrarily well  - both in $\|\cdot\|_\tc$ and $\|\cdot\|_1$
 -  by vectors belonging
to $\tp(M)$. For this reason, we may assume that $h\in\tp(M)$.

Let us write an optimal transportation plan for $h$:

\begin{equation}\label{E:Rep_h} h=a_1(\1_{u_1}-\1_{v_1})+a_2(\1_{u_2}-\1_{v_2})+\dots+
a_n(\1_{u_n}-\1_{v_n}),\quad a_i>0.\end{equation}

By combining the corresponding terms, we may and shall assume that
none of the $v_i$ is equal to any of the $u_j$. We split the
transportation plan in \eqref{E:Rep_h} into two sums: (a) The sum
$h_1$ which contains the terms $a_i(\1_{u_i}-\1_{v_i})$ with at
most one of the elements $u_i,v_i$ being  in the support of $g$;
(b) The sum $h_2$ which contains those terms
$a_i(\1_{u_i}-\1_{v_i})$ for which both of the elements $u_i,v_i$
are in the support of $g$.

The important and easy-to-see observation is that
$\|g+h_1+h_2\|_1\ge \|g+h_2\|_1$. This is because each term of
$h_1$ can decrease the value of $g+h_2$ at some point, but it adds
the same amount elsewhere.

It remains to estimate $\|g+h_2\|_1$. Notice that
\[\|g+h_2\|_\tc\ge \|g\|_\tc-\|h_2\|_\tc\ge \|g\|_\tc-\|h\|_\tc,\]
and that - due to the choice of $h_2$ - the diameter of the
support of $g+h_2$ also does not exceed $D$. It remains to observe
that $\|g+h_2\|_\tc$ does not exceed the amount of product which
is to be moved - that is, $\|g+h_2\|_1/2$ - times the maximal
distance which this product has to travel - that is, $D$.
Therefore,
\[\|g+h_2\|_1\ge\frac2D\,\|g+h_2\|_\tc,\]
whence the conclusion follows.
\end{proof}

It has to be pointed out  that Lemma \ref{L:L_1_below} together
with equation \eqref{E:SeriesRep} along with  the fact that
$\|f\|_\tc$ is the infimum of sums \eqref{E:Complet} implies that
$m_i:=\|f_i\|_1>0$. Furthermore, it is easy to see that equation
\eqref{E:SeriesRep} and the fact that $\|f\|_\tc$ is the infimum
of sums \eqref{E:Complet} imply that, for each $i,m\in
\mathbb{N}$, one can write $f_i=S_i^m+R_i^m$ where
$S_i^m\in\tp(M)$, $R_i^m\in\tc(M)$, $1-2^{-m}\le \|S_i^m\|_\tc\le
1+2^{-m}$, $\|R_i^m\|_\tc\le 2^{-m+1}$, and $\|R_i^m\|_1\le
m_i/8$. Using the last inequality, one arrives at $\|S_i^m\|_1\ge
7m_i/8$.
\medskip

Writing $f_i^+$ and $f_i^-$ for the non-negative and non-positive
parts of $f_i$, we have $\|f_i^+\|_1=\|f_i^-\|_1=m_i/2$, and
therefore we can select
 in the support of $f_i^+$ a finite subset $V_i^+$ such that
$\sum_{v\in V_i^+}f_i(v)\ge \frac{7m_i}{16}$ and in the support of
$f_i^-$ a finite subset $V_i^-$ such that $\sum_{v\in
V_i^-}f_i(v)\le -\frac{7m_i}{16}$.

Let $i$ be fixed for a moment. As is well known (see
\cite[Proposition 3.16]{Wea18}), for each $m$ we can pick an
optimal transportation plan
\[S_i^m=\sum_{j=1}^{N_m}a_j^m(\1_{x_j^m}-\1_{y_j^m}),~ a_j^m>0,\] such that $S_i^m(x_j^m)>0$ and $S_i^m(y_j^m)<0$ for every $j=1,\dots,N_m$.
For each of $\{S_i^m\}_{m=1}^\infty$,   create a matrix whose
columns are labelled by elements of $V_i^+$ and whose rows are
labelled by elements of $V_i^-$. In the intersection of the column
corresponding to $x$ and the row corresponding to $y$, we record
the amount of product which is moved in such an optimal plan from
$x$ to $y$, while we put $0$ if nothing is moved. We claim that
the sum of all entries of the obtained matrix is at least
$\frac{4m_i}{16}=\frac{m_i}4$, i.e. $\sum_{j\in J_m} a^m_j\ge\frac
{m_i}4$ where $J_m = \{j\le N_m:~ (x^m_j, y^m_j)\in V^+_i\times
V^-_i\}$.

To prove this, let us introduce the following functions on $V_i^+$
and $V_i^-$, respectively:

\[P_i^m(v)=\begin{cases} S_i^m(v) &\hbox{ if } S_i^m(v)>0\hbox{ and }v\in V_i^+\\
0 &\hbox{ for all other }v\in V_i^+.
\end{cases}
\]

\[N_i^m(v)=\begin{cases} S_i^m(v) &\hbox{ if }S_i^m(v)<0\hbox{ and }v\in V_i^-\\
0 &\hbox{ for all other }v\in V_i^-.
\end{cases}
\]

It has to be  shown that in any optimal transportation plan for
$S_i^m$, a nontrivial part of product which is available at points
of $V_i^+$, that is $P_i^m$, should be transported to satisfy the
need at points of $V_i^-$, that is $N_i^m$. Evidently, each unit
of product available in the transportation problem $S_i^m$ at
points where $P_i^m>0$ should be moved to the points where
$S_i^m(v)<0$. These points can be the ones where $N_i^m(v)<0$, but
can be also some other points where $S_i^m(v)<0$. To estimate the
amount which should be moved to the points where $N_i^m(v)<0$ we
need some inequalities. Since $a^++b^+\ge (a+b)^+$, we have
\[\|P_i^m\|_1\ge \|f_i^+\|_1-\frac {m_i}{16}
-\|(R_i^m)^+\|_1\ge\frac{6m_i}{16}.\]

To see that some of these $\|P_i^m\|_1$ units have to go to the
points where $N_i^m<0$, we need to prove that outside $V_i^-$ the
total amount of negative values of $S_i^m$ is relatively small. In
fact, since $S_i^m=f_i-R_i^m$ such values can occur either at the
points $v$ where $f_i(v)<0$, but $v\notin V_i^-$, or because of
subtraction of $(R_i^m)^+$. Therefore, the total amount of this
need is $\le\frac{2m_i}{16}$. Thus at least
$\frac{4m_i}{16}=\frac{m_i}4$ of the product available at the
points where $P_i^m(v)>0$ should be transported to the points
where $N_i^m(v)<0$, as claimed.

Since the matrix described in the paragraph where we introduced
$V_i^+$ and $V_i^-$, is finite and a sum of its entries is at
least $\frac{m_i}{4}$, there exists  a subsequence of
$\{S_i^m\}_{m=1}^\infty$ such that, for some choice of $x_i\in
V_i^+$ and $y_i\in V_i^-$, the amount of transported product (with
respect to the picked above optimal transportation plan for
$S_i^m$) from $x_i$ to $y_i$ will be at least $\ep_i>0$, which is
a positive number depending only on $i$. Since subsequences
$\{S_i^m\}_{m=1}^\infty$ and $\{R_i^m\}_{m=1}^\infty$ also satisfy
the defining inequalities for $\{S_i^m\}_{m=1}^\infty$ and
$\{R_i^m\}_{m=1}^\infty$, we may assume without loss of
generality, that the subsequences are $\{S_i^m\}_{m=1}^\infty$ and
$\{R_i^m\}_{m=1}^\infty$ themselves.

From here on,  we follow the same line of argument as in the first
part of the proof (for Case \eqref{C:C}). Using the same reasoning
as in Lemma \ref{L:Alter}, one obtains, after taking subsequences
in $i$ the following alternatives: (1) the pairs $\{x_i,y_i\}$ are
disjoint; (2) they all have a common element (changing signs of
$f_i$ we can assume that all of $x_i$ are the same).

If alternative (1) holds, using the assumption \eqref{I:ii}, we
conclude that there exists a finite subcollection
$\{x_i,y_i\}_{i=1}^n$ such that in the subgraph spanned by it,
there is a perfect matching with a smaller weight. After that one
can apply the same ``improvement of a transportation plan'' as we
used when replacing \eqref{E:CanImprove} by \eqref{E:Improved}.
This improvement shows that there is $\tau_n>0$ such that
\begin{equation}\label{E:Impr_tau}\|S_1^m+\dots+S_n^m\|_\tc\le
\|S_1^m\|_\tc+\dots+\|S_n^m\|_\tc-\tau_n.\end{equation} A crucial
issue  is that this holds for every $m\in\mathbb{N}$. More
precisely, formula \eqref{E:Improved} shows that $\tau_n$ can be
chosen to be the product of $\min_{1\le i\le n}\ep_i$ and the
difference between the weights of the matching
$\{x_iy_i\}_{i=1}^n$ and the minimum weight perfect matching in
the subgraph spanned by $\{x_i,y_i\}_{i=1}^n$.\medskip

Therefore, one obtains
\[\begin{split}\|f_1+\dots+f_n\|_\tc&\le
\|S_1^m+\dots+S_n^m\|_\tc+\|R_1^m+\dots+R_n^m\|_\tc\\&\le
n\cdot(1+2^{-m})-\tau_n+n2^{-m+1}.\end{split}\]

Since this inequality holds for every $m$ and $\tau_n$ does not
depend on $m$, we can pick $m$ in such a way that the number in
the rightmost side of the last inequality is $<n$. This gives a
contradiction with the assumption that $\{f_i\}$ is isometrically
equivalent to the unit vector basis of $\ell_1$.

Finally, consider alternative (2): the pairs $\{x_i,y_i\}$ have a
common point. As above, one may assume that this common point
coincides with all of $x_i$ and denote it by $x$. Similarly to the
argument above, the goal is to prove the equalities in some of the
triangle inequalities. Now the desired equalities are:
\begin{equation}\label{E:Tree2}
d(x,y_i)+d(x,y_j)=d(y_i,y_j) \quad \hbox{ for } i\ne j.
\end{equation}

The proof goes according to the same steps as above. If one of the
triangle inequalities is strict, that is
$d(x,y_i)+d(x,y_j)>d(x_i,x_j)$, we can find $\tau_{i,j}>0$ such
that $\|S_i^m-S_j^m\|_\tc\le
\|S_i^m\|_\tc+\|S_j^m\|_\tc-\tau_{i,j}$ for every $m$. From here,
we derive $\|f_i-f_j\|_\tc<\|f_i\|_\tc+\|f_j\|_\tc$, contrary to
the assumption that $\{f_i\}$ is isometrically equivalent to the
unit vector basis of $\ell_1$.

After establishing  \eqref{E:Tree2}, we complete the proof as in
the previous case.
\end{proof}

\begin{example} As an application of Theorem \ref{T:Contell_1}, we use it to answer the questions
on isometric presence of $\ell_1$ in $\tc(M)$ for metric spaces
$M$ listed in \cite[Remark 10,p.~3416]{CJ17}, for which the answer
has not been known.
 In all of the examples $M=\{v_n\}_{n=1}^\infty$. The
metrics on $M$ are defined for $n>k$ as follows:
\begin{enumerate}[{\rm (a)}]\setlength\itemsep{0.3em}
\item\label{I:a} $\rho(v_k,v_n)=k+n-\frac1k$ \item \label{I:b}
$\rho(v_k,v_n)=2-\frac1k+\frac1n$ \item \label{I:c}
$\rho(v_k,v_n)=2-\frac1k-\frac1{2n}$ \item \label{I:d}
$\rho(v_k,v_n)=1+\frac1n$ \item \label{I:e}
$\rho(v_k,v_n)=1+\frac1{2k}+\frac1n$
\end{enumerate}
\end{example}

Using Theorem \ref{T:Contell_1} we can prove that in all of the
examples the answer is negative - the corresponding transportation
cost spaces do not contain isometric copies of $\ell_1$.

In each of the cases, we prove that, for any selected sequence
$\{x_i,y_i\}_{i=1}^\infty$ of pairs of distinct elements in the
metric space, one can find $m$ such that the set
$\{x_iy_i\}_{i=1}^m$ of edges is not a minimum weight perfect
matching in the complete graph spanned by $\{x_i,y_i\}_{i=1}^m$
(with weight of each edge equal to the distance between its ends).
\medskip

The main observation here is that no matter how the sequence
$\{x_i,y_i\}_{i=1}^\infty$ is selected, it is possible to pick two
pairs $(x_j,y_j)$ and $(x_m,y_m)$, $j<m$, such that the indices of
vertices $x_m$ and $y_m$ in the sequence $\{v_n\}_{n=1}^\infty$
are larger than the indices of $x_j$ and $y_j$. Without loss of
generality we may assume that indices of $x_j,y_j,x_m,y_m$ are
$q_1<q_2<q_3<q_4$, respectively.

This will immediately imply the desired conclusion of the previous
paragraph as soon as it will be derived  that
$d(x_j,x_m)+d(y_j,y_m)<d(x_j,y_j)+d(x_m,y_m)$. Hence, it remains
to verify this inequality in cases \eqref{I:a}-\eqref{I:e}.
Indeed, direct calculations lead to the following inequalities,
which are obviously true:
\medskip

\eqref{I:a}
$\displaystyle{q_1+q_3-\frac1{q_1}+q_2+q_4-\frac1{q_2}<q_1+q_2-\frac1{q_1}+q_3+q_4-\frac1{q_3}}$
as $\displaystyle{\frac1{q_3}<\frac1{q_2}}$.

\eqref{I:b}
$\displaystyle{2-\frac1{q_1}+\frac1{q_3}+2-\frac1{q_2}+\frac1{q_4}<2-\frac1{q_1}+\frac1{q_2}+2-\frac1{q_3}+\frac1{q_4}}$
as $\displaystyle{\frac2{q_3}<\frac2{q_2}}$.

\eqref{I:c}
$\displaystyle{2-\frac1{q_1}-\frac1{2q_3}+2-\frac1{q_2}-\frac1{2q_4}<2-\frac1{q_1}-\frac1{2q_2}+2-\frac1{q_3}-\frac1{2q_4}}$
as $\displaystyle{\frac1{2q_3}<\frac1{2q_2}}$.

\eqref{I:d}
$\displaystyle{1+\frac1{q_3}+1+\frac1{q_4}<1+\frac1{q_2}+1+\frac1{q_4}}$
as $\displaystyle{\frac1{q_3}<\frac1{q_2}}$.

\eqref{I:e}
$\displaystyle{1+\frac1{2q_1}+\frac1{q_3}+1+\frac1{2q_2}+\frac1{q_4}<1+\frac1{2q_1}+\frac1{q_2}+1+\frac1{2q_3}+\frac1{q_4}}$
as $\displaystyle{\frac1{2q_3}<\frac1{2q_2}}$.

\section{Canonical description of $\tc(M)$ as a quotient of $\ell_1$ for
finite metric space $M$}

The goal of this section is a generalization for an arbitrary
finite metric space of the known description \cite[Proposition
10.10]{Ost13}  of transportation cost spaces for finite unweighted
graphs as quotients of finite-dimensional $\ell_1$.
\medskip

Let $M$ be a finite metric space with $n$ elements. It can be
viewed as a weighted complete graph $K_n$, where the weight of the
edge joining $u$ and $v$ is the distance $d(u,v)$. The weighted
$\ell_1$-space on the edge set $E(K_n)$ will be introduced as
follows. Given $f:E(K_n)\to \mathbb{R}$, denote by $f_{uv}$ the
value of this function on the edge $uv$. The norm of $f$ is
defined as:
 \[\|f\|_{1,d}:=\sum_{uv\in
E(K_n)} |f_{uv}|d(u,v).\] The normed space obtained in this way
will be denoted by $\ell_{1,d}=\ell_{1,d}(E(K_n))$. It can be
readily seen that it is an $\frac{n(n-1)}2$-dimensional space
isometric to $\ell_1^{n(n-1)/2}$.

Further, let us fix an  orientation on the edges of $K_n$. Notice
that  only intermediate objects and results rather than final
outcomes  will depend on it. For this reason, it is customary to
say that we select a {\it reference orientation}. Consider  a
cycle $C$ in $K_n$ and pick one of the two possible orientations
of $C$ satisfying the following condition: each vertex of $C$ is a
head of exactly one edge and a tail of exactly one edge. Having
done so,  we introduce the {\it signed indicator function}
$\chi_C\in \ell_{1,d}$ of the cycle $C$ by
\begin{equation}\label{E:SignInd}
\chi_C(e)=\begin{cases} 1 & \hbox{ if }e\in C\hbox{ and its
orientations in $C$ and $G$ are the same}\\
-1 & \hbox{ if }e\in C\hbox{ but its orientations in $C$ and $G$
are different}
\\
0 & \hbox{ if }e\notin C,
\end{cases}\end{equation}
where $e$ is used to denote edges in $K_n$.

The span of this set of functions in $\ell_{1,d}$ is denoted by
$Z$ and called the {\it cycle space} (or the {\it flow space} in
some sources). The following assertion holds.
\begin{theorem}\label{T:GenCyc}  $\tc(M)=\ell_{1,d}/Z$.
\end{theorem}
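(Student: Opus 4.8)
The plan is to realize the isomorphism through the vertex–edge boundary map. Fix the reference orientation on $K_n$, write $h(e)$ and $t(e)$ for the head and tail of an oriented edge $e$, and define the linear map $\partial\colon\ell_{1,d}(E(K_n))\to\tp(M)$ by $\partial f=\sum_{e\in E(K_n)}f_e(\1_{h(e)}-\1_{t(e)})$. Since $M$ is finite we have $\tp(M)=\tc(M)$, and $\partial f$ always has zero sum, so the map is well defined. The argument then splits into four steps: (i) identify $\ker\partial$ with the cycle space $Z$; (ii) check that $\partial$ is onto; (iii) prove the estimate $\|\partial f\|_\tc\le\|f\|_{1,d}$; and (iv) prove the matching reverse estimate on the quotient, which upgrades the resulting algebraic isomorphism $\ell_{1,d}/Z\cong\tc(M)$ to an isometry.

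For step (i), each signed indicator $\chi_C$ satisfies $\partial\chi_C=0$: the orientation chosen in \eqref{E:SignInd} makes every vertex of $C$ the head of exactly one edge and the tail of exactly one edge, so that $\partial\chi_C$ telescopes to $0$; hence $Z\subseteq\ker\partial$. For the reverse inclusion I would invoke the standard fact of algebraic graph theory that, over $\mathbb{R}$, the kernel of the vertex–edge boundary operator of a connected graph is exactly the span of its cycle vectors; equivalently $\dim\ker\partial=|E(K_n)|-(n-1)$, the cyclomatic number, which is the dimension of $Z$, forcing $\ker\partial=Z$. Step (ii) is immediate, since $\1_v-\1_u=\pm\partial e$ for the edge $e=uv$, and such differences span $\tp(M)$.

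For step (iii), note that the defining expression $\partial f=\sum_e f_e(\1_{h(e)}-\1_{t(e)})$ is itself a transportation plan for $\partial f$ of cost $\sum_e|f_e|\,d(h(e),t(e))=\|f\|_{1,d}$; as $\|\cdot\|_\tc$ is the minimal cost over all plans, $\|\partial f\|_\tc\le\|f\|_{1,d}$. Because $\partial$ annihilates $Z$, replacing $f$ by $f+z$ leaves the left-hand side unchanged, so $\|\partial f\|_\tc\le\inf_{z\in Z}\|f+z\|_{1,d}$, i.e. $\|\partial f\|_\tc$ is at most the quotient norm of $[f]$.

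The crux is step (iv). Given $f$, choose an optimal transportation plan $\partial f=\sum_i a_i(\1_{p_i}-\1_{q_i})$ with $a_i>0$ and $\sum_i a_i\,d(p_i,q_i)=\|\partial f\|_\tc$. Each term corresponds to the edge joining $q_i$ and $p_i$; collecting these contributions with the signs dictated by the reference orientation produces $g\in\ell_{1,d}$ with $\partial g=\partial f$. Then $g-f\in\ker\partial=Z$, so $[g]=[f]$, while $\|g\|_{1,d}=\sum_e|g_e|\,d(h(e),t(e))\le\sum_i a_i\,d(p_i,q_i)=\|\partial f\|_\tc$, the inequality holding because $|g_e|$ is at most the total mass the plan routes across $e$, by $|\sum(\cdot)|\le\sum|\cdot|$. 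Hence the quotient norm of $[f]$ is at most $\|g\|_{1,d}\le\|\partial f\|_\tc$, and combining with step (iii) gives equality of the two norms. Thus $\partial$ descends to an isometric isomorphism $\ell_{1,d}/Z\to\tc(M)$. I expect the only genuinely delicate point to be the kernel computation in step (i) — making precise that the real cycle space coincides with the kernel of the boundary operator — whereas the two norm estimates are short once the map $\partial$ is in place.
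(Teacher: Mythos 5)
Your proof is correct, and it takes a genuinely different route from the paper's. You argue on the primal side: you introduce the vertex--edge boundary operator $\partial\colon \ell_{1,d}\to\tp(M)=\tc(M)$, identify $\ker\partial$ with the cycle space $Z$ (standard algebraic graph theory: the incidence matrix of a connected graph on $n$ vertices has rank $n-1$, so $\dim\ker\partial=|E(K_n)|-n+1=\dim Z$), and then obtain the isometry from a two-sided estimate whose essence is that coset representatives of $[f]$ correspond to transportation plans for $\partial f$: any representative $f+z$ is itself a plan of cost $\|f+z\|_{1,d}$, giving $\|\partial f\|_\tc\le\inf_{z\in Z}\|f+z\|_{1,d}$, while an optimal plan for $\partial f$, re-read as an edge function $g$, is a representative of $[f]$ with $\|g\|_{1,d}\le\|\partial f\|_\tc$, giving the reverse inequality. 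The paper instead dualizes: since both spaces are finite-dimensional, it identifies $(\ell_{1,d}/Z)^*$, via quotient/annihilator duality, with the cut space $B=Z^\perp$ equipped with the $\ell_{\infty,d}$-norm, and then, using the description of $B$ as the image of the transposed incidence matrix $D^T$ (so that elements of $B$ have the form $b(uv)=h(u)-h(v)$), with $\lip_0(M)=(\tc(M))^*$ as in \cite[Theorem 10.2]{Ost13}. Both routes rest on the same combinatorial fact --- the rank of the incidence matrix, equivalently the cycle/cut decomposition --- but yours is more constructive and self-contained: it exhibits the isometry explicitly and does not need the Lipschitz-dual description of $\tc(M)$, whereas the paper's argument is shorter because it outsources both the combinatorics and the analysis to cited results. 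One point worth making explicit in your write-up: the kernel identification in step (i) is used twice --- once so that $\partial$ descends to an injective map on the quotient, and again in step (iv) to conclude $[g]=[f]$ from $g-f\in\ker\partial$ --- so it is indeed the load-bearing step, as you anticipated.
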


\begin{proof} Since the spaces $\ell_{1,d}/Z$ and
$\tc(M)$ are finite-dimensional, it suffices to show that the dual
space $(\ell_{1,d}/Z)^*$ can be in a natural way identified with
the space $\lip_0(M)$, which is known to coincide with
$(\tc(M))^*$, see \cite[Theorem 10.2]{Ost13}.
\medskip

To begin with, let us introduce the spaces $\ell_{\infty,d}$ and
$\ell_{2,d}$ as spaces of real-valued functions on $E(K_n)$ with
the norms
\[\|f\|_{\infty,d}=\max_{uv\in E(K_n)}\frac{|f_{uv}|}{d(u,v)}\]
and \begin{equation}\label{E:L2Norm}\|f\|_{2,d}=\left(\sum_{uv\in
E(K_n)}|f_{uv}|^2\right)^{\frac12},\end{equation} respectively.

It is clear that  $\ell_{2,d}$ is an inner product space, in which
 the notion of orthogonality is naturally defined. We denote the inner product inducing the
norm \eqref{E:L2Norm} by $\langle\cdot,\cdot\rangle$.

The subspace of $\ell_{2,d}$ orthogonal to the cycle space $Z$ is
denoted by $B$ and is called the {\it cut space} or {\it cut
subspace}. Observe that by virtue of  \eqref{E:L2Norm},   $B$ does
not really depend on the distance $d$, but only on the size of
$M$. One has a direct, orthogonal in $\ell_{2,d}$, decomposition
\begin{equation}\label{E:ZBDecomp}\ell_{2,d}=Z\oplus B.\end{equation}

Next, we apply the standard duality result, which, generally
speaking, states that that the dual of the quotient space
$\mathcal{X}/\mathcal{Y}$ is isometric to the subspace
$\mathcal{Y}^\bot:=\{f\in \mathcal{X}^*:~\forall y\in
\mathcal{Y},~f(y)=0\}$. Observing that our choice of norms on
$\ell_{1,d}$  and $\ell_{\infty,d}$ is such that
$\ell_{\infty,d}=(\ell_{1,d})^*$ with the pairing given by
$g(f)=\langle g, f\rangle$, one concludes  that the dual space of
the quotient space $\ell_{1,d}/Z$ is naturally isometric to the
space $B_\infty$, where  $B_\infty$ stands for the space $B$
endowed with its $\ell_{\infty,d}$-norm.
\medskip

To complete our argument it is convenient to use another
description of $B$. Denote by $\ell_2(M)$ the space $\mathbb{R}^M$
with its Euclidean norm. Let $D$ be defined as a matrix whose rows
are labelled using elements of $M$, whose columns are labelled
using (oriented) edges of $K_n$ and the $ve$-entry is given by
\[
d_{ve}=\begin{cases} 1, & \hbox{ if }v\hbox{ is the head of }e,\\
-1, & \hbox{ if }v\hbox{ is the tail of }e,\\
0, & \hbox{ if $v$ is not incident to }e.\end{cases} \]

The description of $B$ which we are going to use is that that $B$
is the image of $\ell_2(M)$ under the action of $D^T$ with $D^T$
being the transpose of the matrix $D$. See \cite[p.~315]{Ost13}
noting that the result described there holds, in particular, for
the cut space of the complete graph.

Therefore, each $b\in B$ can be represented as $b=D^Tf$, implying
that $b(uv)=h(u)-h(v)$ for some $h:M\to \mathbb{R}$ and all
oriented edges $uv$, where $u$ is the head and $v$ is the tail. It
is clear that addition of a constant to the function $h$ does not
change $D^Th$, so one may assume $h(O)=0$, that is,
$h\in\lip_0(M)$. Clearly, the Lipschitz constant of $h$ is
equal to
\[\lip(h)=\max_{uv\in E(K_n)}\frac{|h(u)-h(v)|}{d(u,v)}=\|b\|_{\infty,d}.\]
Thus, we have established a natural isometry between $B_\infty$
and $\lip_0(M)$.\end{proof}

\section*{Acknowledgement}

The second author gratefully acknowledges the support by the
National Science Foundation grant NSF DMS-1700176 and by St.
John's University. We would like to thank the referee for the
careful reading of the paper and numerous corrections.


\begin{small}

\renewcommand{\refname}{\section*{References}}

\end{small}

\textsc{Department of Mathematics, Atilim University, 06830
Incek,\\ Ankara, TURKEY} \par \textit{E-mail address}:
\texttt{sofia.ostrovska@atilim.edu.tr}\par\medskip

\textsc{Department of Mathematics and Computer Science, St. John's
University, 8000 Utopia Parkway, Queens, NY 11439, USA} \par
  \textit{E-mail address}: \texttt{ostrovsm@stjohns.edu} \par


\begin{thebibliography}{99}

\bibitem{APP19+} R.\,J.~Aliaga, C.~Petitjean,
A.~Proch\'azka, Embeddings of Lipschitz-free spaces into $\ell_1$,
{\tt arXiv:1909.05285}.

\bibitem{AE56} R.\,F.~Arens, J.~Eells, Jr., On embedding uniform and topological spaces, {\it Pacific J. Math.},
{\bf  6}  (1956), 397--403.

\bibitem{Bou86} J.~Bourgain, The metrical interpretation
of superreflexivity in Banach spaces, {\it Israel J. Math.}, {\bf
56} (1986), no. 2, 222--230.

\bibitem{Cha02} M.\,S.~Charikar, Similarity estimation techniques from
rounding algorithms. {\it Proceedings of the Thirty-Fourth Annual
ACM Symposium on Theory of Computing}, 380--388, ACM, New York,
2002.

\bibitem{CD16} M.~C\'uth, M.~Doucha, Lipschitz-free spaces over ultrametric spaces. {\it Me\-di\-terr. J. Math.} {\bf 13} (2016), no. 4,
1893--1906.

\bibitem{CDW16} M.~C\'uth, M.~Doucha, P.~Wojtaszczyk, On the
structure of Lipschitz-free spaces. {\it Proc. Amer. Math. Soc.}
{\bf 144} (2016), no. 9, 3833--3846.

\bibitem{CJ17} M.~C\'uth, M.~Johanis,
Isometric embedding of $\ell_1$ into Lipschitz-free spaces and
$\ell_\infty$ into their duals. {\it Proc. Amer. Math. Soc.} {\bf
145} (2017), no. 8, 3409--3421.

\bibitem{CKK17} M.~C\'uth, O.\,F.\,K.~Kalenda, P.~Kaplick\'y,
Isometric representation of Lipschitz-free spaces over convex
domains in finite-dimensional spaces. {\it Mathematika} {\bf 63}
(2017), no. 2, 538--552.

\bibitem{DKP16} A.~Dalet, P.L.~Kaufmann, A.~Proch\'azka,
Characterization of metric spaces whose free space is isometric to
$\ell_1$. {\it Bull. Belg. Math. Soc. Simon Stevin} {\bf 23}
(2016), no. 3, 391--400.

\bibitem{DKO18+} S.\,J. Dilworth, D.~Kutzarova,
M.\,I.~Ostrovskii, Lipschitz-free spaces on finite metric spaces,
{\it Canad. J. Math.}, 72 (2020), 774--804, {\tt
DOI:10.4153/S0008414X19000087}.

\bibitem{FHH+11} M.~Fabian, P.~Habala, P.~H\'ajek, V.~Montesinos,
V.~Zizler, {\it Banach space theory. The basis for linear and
nonlinear analysis}. CMS Books in Mathematics/Ouvrages de
Math\'ematiques de la SMC. Springer, New York, 2011.

\bibitem{Fre10} M.~Fr\'echet, Les dimensions d'un ensemble abstrait, {\it Math. Ann.},
{\bf  68}  (1910),  no. 2, 145--168.

\bibitem{God10}
A.~Godard,  Tree metrics and their Lipschitz-free spaces, {\it
Proc. Amer. Math. Soc.}, {\bf 138} (2010), no. 12, 4311--4320.

\bibitem{GK03} G.~Godefroy, N.\,J.~Kalton, Lipschitz-free Banach spaces, {\it Studia Math.},
{\bf 159}  (2003),  no. 1, 121--141.

\bibitem{GL18} G.~Godefroy, N.~Lerner, Some natural subspaces and quotient spaces of $L_1$.
{\it Adv. Oper. Theory} {\bf 3} (2018), no. 1, 61--74.

\bibitem{IT03} P.~Indyk, N.~Thaper, Fast image retrieval via embeddings,
in: {\it ICCV 03: Proceedings of the 3rd International Workshop on
Statistical and Computational Theories of Vision}, 2003.

\bibitem{Kal04} N.\,J.~Kalton,
Spaces of Lipschitz and H\"older functions and their applications,
{\it Collect. Math.}, {\bf 55} (2004), 171--217.

\bibitem{KG49} L.\,V.~Kantorovich, M.\,K.~Gavurin, Application of mathematical methods in the analysis of cargo flows (Russian),
in: {\it Problems of improving of transport efficiency}, USSR
Academy of Sciences Publishers, Moscow, 1949, pp. 110--138.

\bibitem{KMO19+} S.\,S.~Khan, M.~Mim, M.\,I.~Ostrovskii, Isometric copies of $\ell_\infty^n$ and $\ell_1^n$ in transportation cost
spaces on finite metric spaces, in: {\it The Mathematical Legacy
of Victor Lomonosov. Operator Theory}, pp.~189--203, De Gruyter,
2020; {\tt DOI:10.1515/9783110656756-014}.


\bibitem{KN06} S. Khot, A.~Naor, Nonembeddability
theorems via Fourier analysis, {\it Math. Ann.}, {\bf 334} (2006),
821--852.

\bibitem{LP09} L.~Lov\'asz, M.\,D.~Plummer, {\it Matching theory.}
Corrected reprint of the 1986 original. AMS Chelsea Publishing,
Providence, RI, 2009.

\bibitem{Nao18} A.~Naor, Metric dimension reduction: a snapshot of the Ribe program, {\it Proc. Int. Cong. of
Math. - 2018}, Rio de Janeiro, Vol. {\bf 1}, 759--838.

\bibitem{NS07} A. Naor, G. Schechtman, Planar Earthmover is not in
$L_1$, {\it SIAM J. Computing}, {\bf 37} (2007), 804--826.

\bibitem{OO19+} S.~Ostrovska, M.\,I.~Ostrovskii, Generalized transportation cost spaces, {\it Me\-di\-terr. J. Math.},
{\bf 16} (2019), no. 6, Paper No. 157.

\bibitem{Ost13} M.\,I.~Ostrovskii, {\it Metric Embeddings: Bilipschitz and Coarse Embeddings into Banach Spaces},
de Gruyter Studies in Mathematics, {\bf 49}. Walter de Gruyter \&\
Co., Berlin, 2013.

\bibitem{Wea18} N.~Weaver, {\it Lipschitz algebras}, Second edition, World Scientific
Publishing Co. Pte. Ltd., Hackensack, NJ, 2018.


\end{thebibliography}
\end{document}